\documentclass{amsart}
\usepackage{amssymb,euscript,tikz,units}
\usepackage{verbatim}

\newcommand{\ML}{\ensuremath{\mathcal{ML}}}

\newcommand{\PML}{\ensuremath{\mathcal{PML}}}

\renewcommand{\H}{\mathbb{H}}
\newcommand{\Z}{\mathbb{Z}}
\newcommand{\calT}{\mathcal{T}}
\newcommand{\T}{\calT}
\newcommand{\euS}{\EuScript{S}}
\newcommand{\euB}{\EuScript{B}}
\newcommand{\R}{\mathbb{R}}

\newcommand{\CC}{\mathcal{C}}
\DeclareMathOperator{\Mod}{Mod}
\DeclareMathOperator{\Diff}{Diff}

\DeclareMathOperator{\I}{i}
\newcommand{\di}{d_{\rm i}}
\newcommand{\dc}{d_S}
\newcommand{\Sph}[2]{\mathcal{S}_{#1}(#2)}
\newcommand{\Sphbar}[2]{\overline{\mathcal{S}_{#1}(#2)}}

\newcommand{\short}[3]{P_{#2}^{#1}(#3)}

\theoremstyle{plain}
\newtheorem{theorem}{Theorem}
\newtheorem{corollary}[theorem]{Corollary}
\newtheorem{proposition}[theorem]{Proposition}
\newtheorem{lemma}[theorem]{Lemma}

\newtheorem*{blurb}{Theorem}

\theoremstyle{definition}
\newtheorem{definition}[theorem]{Definition}

\newtheorem{remark}[theorem]{Remark}
\newtheorem{example}[theorem]{Example}
\begin{document}

\title{Spheres in the curve complex}
\author[Dowdall Duchin Masur]{Spencer Dowdall, Moon Duchin, and Howard Masur}
\maketitle

\section{Introduction}

The curve graph (or curve complex) $\CC(S)$ associated to a surface $S$ of finite type   is a locally infinite combinatorial object
that encodes topological information about the surface through intersection patterns of simple closed curves.
It is known to be $\delta$-hyperbolic 
~\cite{MM1}, a property that is often described by saying that a space is ``coarsely a tree."  To be precise, there exists $\delta$ such that for  any geodesic triangle, each side is in the $\delta$-neighborhood of the union of the other two sides.  
In this note, we will investigate the finer metric properties of the curve graph by considering the geometry of 
spheres; specifically, we will study the average distance between pairs of points on $\Sph r{\alpha}$,
the sphere of radius $r$ centered at $\alpha$.  To make sense of the idea of averaging, we will develop a definition of null and generic sets 
in \S\ref{nullness} that is compatible with the topological structure of the curve graph.  

Given a  family of
probability measures $\mu_r$ on the spheres $\Sph rx$ in a metric space $(X,d)$, 
let $E(X)=E(X,x,d,\{\mu_r\})$ be  the normalized average distance between points on large spheres:
$$E(X):=\lim_{r\to\infty} \frac 1r \int_{\Sph rx \times \Sph rx} d(y,z) \ \ d\mu_r(y) d\mu_r(z),$$
if the limit exists.  For finitely generated groups with their Cayley graphs, 
or more generally for locally finite graphs, 
we can study averages with respect to counting measure because the spheres are finite sets.
It is shown in \cite{DLM2} that non-elementary hyperbolic groups all have 
$E(G,S)=2$ for any finite generating set $S$; this is also the case in the hyperbolic space $\H^n$ of 
any dimension endowed with the natural measure on spheres.
By contrast, $E(\R^n)<2$ and  
$E(\Z^n,S)<2$ for all $n$ and $S$, with nontrivial dependence on $S$.
In particular this shows that $E(X)$ varies under quasi-isometry, so 
it is a fine and not a coarse asymptotic statistic.
Note that $\delta$-hyperbolicity itself (without an assumption of homogeneity) does
not imply $E=2$:  even for locally finite trees, one can get any value $0\le E\le 2$.

In \cite{DDM1}, we show that for Teichm\"uller space with the Teichm\"uller metric and various visual measures on 
spheres, $E(\T(S))=2$.
Here, we show something stronger for the curve graph.
\begin{blurb} With respect 
to our notion of genericity for spheres in the curve graph, almost every pair of points on $\Sph r\alpha$ is at distance exactly $2r$ apart.  
\end{blurb}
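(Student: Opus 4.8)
The plan is to recast the statement about distances as a negligibility statement about the sphere inside $\PMF(S)$, and then to prove that negligibility using the relationship between the topologies of $\PMF(S)$ and $\CC(S)$.

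I would first reformulate. Since $\alpha$ lies on a path of length $2r$ from $\beta$ to $\gamma$ whenever $d(\alpha,\beta)=d(\alpha,\gamma)=r$, every pair of points of $\Sph{r}{\alpha}$ is at distance at most $2r$, and the definition of the Gromov product gives $d(\beta,\gamma)=2r-2(\beta\mid\gamma)_\alpha$; thus a pair is at distance exactly $2r$ iff $(\beta\mid\gamma)_\alpha=0$, iff $\alpha$ lies on a geodesic $[\beta,\gamma]$. So the theorem is equivalent to the assertion that the exceptional locus $\mathcal E_r=\{(\beta,\gamma)\in\Sph{r}{\alpha}\times\Sph{r}{\alpha}:\ d_{\CC(S)}(\beta,\gamma)<2r\}$ is null. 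Note that $\mathcal E_r$ is not small ``sectorwise'': for any curve $\tau$ disjoint from $\alpha$, every curve admitting a geodesic from $\alpha$ through $\tau$ lies at distance $r-1$ from $\tau$, so the square of that set of curves is contained in $\mathcal E_r$, and countably many such sectors exhaust $\Sph{r}{\alpha}$. Hence $\mathcal E_r$ is null exactly when the sphere $\Sph{r}{\alpha}$ itself is null, and it suffices to prove that $\Sphbar{r}{\alpha}\subseteq\PMF(S)$ is negligible.

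By the construction of null sets in \S\ref{nullness}, to see that a family of curves is null it is enough to show that its closure in $\PMF(S)$ contains no foliation that is both filling and minimal --- by Klarreich's theorem these foliations are precisely the ones carrying points of $\partial\CC(S)=\mathcal{EL}(S)$, and they form a full-measure residual subset of $\PMF(S)$, so this is the topologically compatible notion promised in the introduction. Now suppose toward a contradiction that curves $\beta_n\in\Sph{r}{\alpha}$ converge in $\PMF(S)$ to a filling minimal foliation $F$. Because convergence of measured foliations to such an $F$ forces convergence of the supporting laminations in the coarse Hausdorff sense (Klarreich), the $\beta_n$ converge to $[F]\in\mathcal{EL}(S)=\partial\CC(S)$ in $\CC(S)\cup\partial\CC(S)$, which makes the Gromov products $(\beta_n\mid[F])_\alpha$ tend to infinity. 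But $(\beta_n\mid[F])_\alpha\le d_{\CC(S)}(\alpha,\beta_n)=r$ for every $n$, a contradiction. Hence no filling minimal foliation lies in $\Sphbar{r}{\alpha}$, so $\Sph{r}{\alpha}$ is null, so $\mathcal E_r$ is null, which is the theorem.

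I expect the main work to lie not in this short contradiction but at its two ends. On one side, \S\ref{nullness} must produce a notion of null set that is a genuine ideal, that behaves well under products (so that nullity of the sphere really does transfer to nullity of $\mathcal E_r$), and that is faithful to the structure of $\CC(S)$ rather than to that of $\PMF(S)$ alone; nailing that down is the delicate part. On the other side, a $\PMF$-limit of curves from the sphere need not be filling or minimal, so one must also dispose of limits landing in the lower-complexity strata --- bounding a multicurve, carried by a proper essential subsurface, and so on --- either by observing that those strata are themselves negligible or by an induction on complexity, so that $\Sphbar{r}{\alpha}$ really is contained in the negligible set. Once those two ingredients are secured, the geometry required is exactly the one-line Gromov-product estimate above.
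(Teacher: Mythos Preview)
Your reduction collapses the theorem to a triviality. You observe that each fiber $\mathcal E_r(\beta)$ contains a full sector $\Sigma_\tau$, and since sectors exhaust the sphere you conclude that $\mathcal E_r$ can be null only under a definition in which $\Sph r\alpha$ is itself null; you then hope \S\ref{nullness} supplies such a definition and verify that $\Sphbar r\alpha\subset\PML(S)$ has measure zero via Klarreich and a Gromov-product bound. But any notion of nullness on $\Sph r\alpha$ under which the whole sphere is null makes \emph{every} subset null---including the complement of $\mathcal E_r$---so the assertion ``almost every pair is at distance $2r$'' becomes vacuous. The paper confronts exactly this: $\Sph r\alpha$ is Lustig--Moriah null in $\euS$ (this is Proposition~\ref{prop:bounded}, for which your Klarreich argument is an alternative and less elementary proof), and therefore nullness \emph{within} $\Sph r\alpha$ must be defined differently. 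The paper's visual definition works in $\PML(S_\alpha)$, not $\PML(S)$: a set $E\subset\Sph r\alpha$ is null when the set $E_1$ of first steps $\gamma_1\in\Sph1\alpha$ from which $E$ is reachable in $r-1$ further steps has measure-zero closure in $\PML(S_\alpha)$. Under this definition $\Sph r\alpha$ is \emph{not} null in itself (every $\gamma_1\in\Sph1\alpha$ lies on some geodesic of length $r$ from $\alpha$), so there is no transfer of the kind you want, and your sector observation is precisely the obstacle the proof must overcome rather than exploit.

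The actual argument is of a different nature. Fix $\beta$; one must show that the set $E_1(\beta)\subset\Sph1\alpha$ of first steps toward $\mathcal E_r(\beta)$ is null in $\PML(S_\alpha)$. Any geodesic in $\CC(S)$ from $\beta$ to some $\gamma\in\mathcal E_r(\beta)$ has length $<2r$ and hence avoids $\alpha$, so under subsurface projection to $S_\alpha$ its image has uniformly bounded diameter; the bounded geodesic image theorem of Masur--Minsky, applied to the geodesic $[\gamma_1,\gamma]$ (which also avoids $\alpha$), then pins $\gamma_1$ inside a ball of uniformly bounded radius in $\CC(S_\alpha)$ about $\pi_{S_\alpha}(\beta)$. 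Proposition~\ref{prop:bounded} applied one complexity down shows such a ball is null in $\PML(S_\alpha)$, so $\mathcal E_r(\beta)$ is null for every $\beta$, and the Fubini-type product definition finishes. The ingredient missing from your proposal is the subsurface projection to $S_\alpha$ and its coarse Lipschitz behavior along paths that miss $\alpha$; Klarreich's boundary identification and Gromov products in $\CC(S)$ play no role.
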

This holds for every $r$ and is certainly stronger than 
saying that the average distance is asymptotic to $2r$, so we can write $E(\CC(S))=2$.  
That is, suppose we start with $\alpha$ and a pair of curves $\beta$ and $\gamma$ such that the shortest path in the curve graph from $\alpha$ to either $\beta$ or $\gamma$ has length $r$. Then, almost surely, there is no more efficient way to connect them with each other than to travel through 
$\alpha$.
This result tells us that, even though the space $\CC(S)$ is far from uniquely geodesic,
the concatenation of two geodesic segments of length $r$ that share a common endpoint is almost 
always itself geodesic.
In this sense the curve graph is ``even more hyperbolic than a tree."

Of course, the meaningfulness of this result depends on the notion of genericity. Lustig and Moriah \cite{LM2} have introduced a very natural definition for genericity in $\CC(S)$ which uses the topology and measure class of 
$\PML(S)$. We identify the sphere of radius $1$ in $\CC(S)$ 
with a lower-complexity curve complex, so that genericity can be defined in the same way. 
We then extend  to spheres of larger radius in a ``visual'' manner; see Definition~\ref{def:nullinspheres}. While the Lustig--Moriah definition gives content to statements about typical curves on $S$, our  notion of genericity on spheres enables us to talk about typical properties of high-distance curves on $S$. 

\section{Background}

We fix a topological surface $S=S_{g,n}$ with genus $g$ and $n$ punctures, 
and let $h=6g-6+2n$.
Let $\euS$ be the set of homotopy classes of essential nonperipheral 
simple closed curves on $S$.  From now on, a {\em curve} will mean an element 
of $\euS$.  
Next we define the {\em curve graph} $\CC(S)$: The vertex set of $\CC(S)$ is $\euS$. In the case that $h> 2$, two curves are joined by an edge if they are disjointly realizable. In the case of $S_{1,1}$ we join two vertices if the curves intersect once, and in the case of $S_{0,4}$ 
two vertices are joined by an edge if the curves intersect twice.
In each of these cases, $\CC(S)$ is endowed with the standard path metric, denoted
$\dc(\alpha,\beta)$.

For $\alpha\in\euS$, we write $S_\alpha$ to denote the lower-complexity punctured (possibly disconnected) surface
obtained by cutting open $S$ along $\alpha$.  
 Note that $\CC(S_\alpha)$ can be realized as the subgraph of $\CC(S)$ consisting of neighbors
of $\alpha$---that is, it is identified with the sphere $\Sph 1\alpha \subset\CC(S)$.

Recall that a {\em measured lamination} on $S$, given a hyperbolic structure,  is a foliation of a closed subset of $S$ by geodesics, together with a
measure on transversals that is invariant under holonomy along the leaves of  the lamination.
We will use $\ML(S)$ to denote the space of {\em  measured laminations} on $S$. 
Let $\Mod(S):=\pi_0(\Diff^+\!(S))$ denote the {\em mapping class group} of $S$; it acts on $\CC(S)$ and on $\ML(S)$.  The latter has  a natural $\Mod(S)$-invariant measure $\mu$.
(This is the 
Lebesgue measure associated to the  piecewise linear structure induced on $\ML(S)$ by train track neighborhoods.)
The space of {\em projective measured laminations}  
$\PML(S)$ is obtained by identifying laminations whose transverse measures differ only by a scalar multiple; 
it is endowed with the topology of a sphere of dimension $h-1$ and it inherits a natural 
$\Mod(S)$-invariant measure class, which we will again denote by $\mu$.

A {\em train track} on $S$ is a finite collection of disjointly embedded arcs, called {\em branches}, meeting at vertices, called {\em switches},
such that the branches are $C^1$ away from switches and have well-defined tangents at the switches.  (There are also 
non-degeneracy conditions for switches and topological conditions on the complement; see \cite{PH} for details.)
Train tracks exist on every surface with $h>0$, and we say that a lamination $F$ is {\em carried} by a train track $\tau$
if there is a map $\phi: S\to S$ isotopic to the identity with $\phi(F)\subset \tau$ . Via this carrying relation,
measured laminations that are carried by $\tau$ correspond to choices 
of weights on the branches of $\tau$ that satisfy switch conditions.

A connected proper subsurface $V$ of $S$ is {\em essential}  if all components of $\partial V$ are essential; i.e., they are homotopically nontrivial and not isotopic to a puncture.

\begin{definition}  
Consider a non-annular essential subsurface $V$.
The \emph{subsurface projection} $\pi_V$ is a coarsely well-defined map $\CC(S)\to \CC(V)$ defined as follows.
Realize $\beta\in \euS$ and 
$\partial V$ as geodesics (in any hyperbolic metric on $S$). If $\beta\subset V$, let $\pi_V(\beta)=\beta$. 
If $\beta$ is disjoint from $V$, then $\pi_V(\beta)$ is undefined. Otherwise, $\beta\cap V$ is a disjoint union of finitely many homotopy classes 
of arcs with endpoints on $\partial V$, and we obtain $\pi_V(\beta)$ by choosing any arc and performing a surgery along $\partial V$ to create 
a simple closed curve contained in $V$.  All possible ways to do this form  a non-empty subset of the curve complex $\CC(V)$ with 
 uniformly bounded diameter. We can denote by $d_V(\alpha, \beta)$ the diameter in $\CC(V)$ of $\pi_V(\alpha)\cup \pi_V(\beta)$. If $\alpha$ and $\beta$ are disjoint and both intersect $V$ then $d_V(\alpha,\beta)\leq 4$. 
\end{definition}

There is a well-defined inclusion $\euS\hookrightarrow \PML(S)$ whose image is dense 
and we will identify $\euS$ with its image under that map.
The {\em supporting subsurface} of a lamination  is the subsurface filled by $F$.
We will denote the geometric intersection number on $\euS$ by $\I(\alpha,\beta)$, and we recall that it
has a  well-defined extension to $\ML(S)$.  On $\PML(S)$, we can thus talk about whether or not $\I(F,G)=0$.
Then $\PML(S_\alpha)$ can be identified with the subset of
$\PML(S)$ consisting of those laminations $F$ for which $\I(F,\alpha)=0$.
If $\alpha$ is nonseparating, then $S_\alpha$ has complexity $h-2$; if $\alpha$ is separating, then $S_\alpha$
consists of two surfaces with complexity $h_1+h_2=h-2$.  In that case we consider $\PML(S_\alpha)$ as a product of the corresponding spaces for the two components.  In either case 
we see that $\PML(S_\alpha)$ has positive codimension in $\PML(S)$.

\begin{definition}
Given a group $G$ that acts on a space $X$ with Borel algebra $\euB(X)$, a {\em $G$-invariant mean} on $X$ 
is a function $\sigma: \euB(X)\to [0,1]$ such that 
\begin{itemize}
\item $\sigma(\emptyset)=0$ and $\sigma(X)=1$ \ ;
\item if $B_1,\ldots B_N\in\euB(X)$ are pairwise disjoint then $\sigma(\bigsqcup B_i)=\sum \sigma(B_i)$ \ ; \quad and 
\item $\sigma(B)=\sigma(gB)$ for all $B\in\euB(X)$ and all $g\in G$.
\end{itemize}
\end{definition}

Note that invariant means are only required to be finitely additive, while measures must be countably additive.

\begin{proposition}
There is no  $\Mod(S)$-invariant mean on $\euS$ or on $\PML$. 
\end{proposition}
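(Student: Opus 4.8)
The plan is to assume such a mean exists and derive a contradiction from the North--South dynamics of pseudo-Anosov mapping classes on $\PML(S)$, together with the mere finite additivity of means. I will treat the $\PML(S)$ case first; the $\euS$ case is obtained by intersecting everything in sight with the dense subset $\euS$. So suppose $\sigma\colon\euB(\PML(S))\to[0,1]$ is a $\Mod(S)$-invariant mean. Since $\Mod(S)$ is not virtually cyclic, it contains two \emph{independent} pseudo-Anosov elements $f$ and $g$; independence means that their fixed-point pairs $\{f^+,f^-\}$ and $\{g^+,g^-\}$ in $\PML(S)$ are disjoint, so $f^+,f^-,g^+,g^-$ are four distinct points, and we may fix pairwise disjoint open neighborhoods $P^+_f\ni f^+$, $P^-_f\ni f^-$, $P^+_g\ni g^+$, $P^-_g\ni g^-$.

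Because a pseudo-Anosov acts on $\PML(S)$ with source--sink dynamics and $\PML(S)$ is compact, the iterates $f^{n}$ converge to $f^+$ uniformly on the compact set $\PML(S)\setminus P^-_f$, and likewise for $g$; hence there is $N$ with $f^{N}\bigl(\PML(S)\setminus P^-_f\bigr)\subseteq P^+_f$ and $g^{N}\bigl(\PML(S)\setminus P^-_g\bigr)\subseteq P^+_g$. Writing $a=f^{N}$, and using that $\sigma$ is $\Mod(S)$-invariant and monotone (monotonicity being a consequence of finite additivity),
$$\sigma(P^+_f)\ \ge\ \sigma\bigl(a(\PML(S)\setminus P^-_f)\bigr)\ =\ \sigma\bigl(\PML(S)\setminus P^-_f\bigr)\ =\ 1-\sigma(P^-_f),$$
so $\sigma(P^+_f)+\sigma(P^-_f)\ge 1$, and the same argument with $g^{N}$ gives $\sigma(P^+_g)+\sigma(P^-_g)\ge 1$. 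But $P^+_f,P^-_f,P^+_g,P^-_g$ are pairwise disjoint Borel subsets of $\PML(S)$, so finite additivity forces $\sigma(P^+_f)+\sigma(P^-_f)+\sigma(P^+_g)+\sigma(P^-_g)\le\sigma(\PML(S))=1$. This yields $2\le 1$, a contradiction, so no $\Mod(S)$-invariant mean on $\PML(S)$ exists.

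For $\euS$, run the identical argument with $Q^{\pm}_f:=P^{\pm}_f\cap\euS$ and $Q^{\pm}_g:=P^{\pm}_g\cap\euS$. Each of these is relatively open in $\euS$, hence lies in $\euB(\euS)$, and each is nonempty because $\euS$ is dense in $\PML(S)$. As mapping classes permute $\euS$, the containment $a\bigl(\PML(S)\setminus P^-_f\bigr)\subseteq P^+_f$ restricts to $a(\euS\setminus Q^-_f)\subseteq Q^+_f$, and similarly for $g$; the two-line computation above then gives $\sigma(Q^+_f)+\sigma(Q^-_f)\ge 1$ and $\sigma(Q^+_g)+\sigma(Q^-_g)\ge 1$, again contradicting the disjointness of the four sets inside $\euS$.

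The real content is the use of \emph{two independent} pseudo-Anosovs rather than one: a single pseudo-Anosov generates a subgroup isomorphic to $\Z$, which is amenable and cannot by itself obstruct an invariant mean, and from one pseudo-Anosov one can only manufacture two disjoint ``ping-pong'' neighborhoods (of the attracting and repelling laminations), whereas the counting argument needs four pairwise disjoint sets each absorbing all but a small piece of the space. I therefore expect the only point requiring care to be the invocation of independent pseudo-Anosovs and their source--sink dynamics on $\PML(S)$; the finite additivity argument, the density of $\euS$, and the measurability of the relevant sets are routine.
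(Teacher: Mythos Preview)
Your argument is correct, and it is genuinely different from the paper's. The paper does not invoke North--South dynamics directly on neighborhoods of fixed points. Instead it fixes a finite family of train tracks $\tau_1,\dots,\tau_T$ whose carried sets cover $\euS$ (and have a disjointness property for curves with positive weights), applies pigeonhole to find one $\tau_i$ whose carried set $B_i$ has $\sigma(B_i)=a_i>0$, chooses $N>1/a_i$, and then uses $N$ pseudo-Anosovs with attractors carried by $\tau_i$ and repellers carried elsewhere to produce $N$ pairwise disjoint $\Mod(S)$--images of $B_i$, forcing $\sigma(\euS)\ge N a_i>1$.

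Your route is the standard convergence-group/ping-pong obstruction: two independent loxodromic-type elements, four disjoint neighborhoods, and the estimate $\sigma(P_f^+)+\sigma(P_f^-)\ge 1$ from source--sink dynamics. It is shorter and avoids train tracks entirely; it also makes transparent that the result holds for any group acting on a compactum with two independent elements exhibiting North--South dynamics. The paper's approach, by contrast, is more tied to the specific combinatorics of $\PML$ and does not need two \emph{independent} pseudo-Anosovs fixed in advance---it only needs a supply of pseudo-Anosovs with prescribed locations of attractors and repellers relative to the train-track chart, which is guaranteed by density of fixed-point pairs in $\PML\times\PML$. Either way the essential mechanism is the same: invariance plus finite additivity cannot coexist with the contracting dynamics of pseudo-Anosovs. (Your remark that the $Q^\pm$ are nonempty is harmless but not actually needed for the inequality.)
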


\begin{proof}
One can choose a finite number $T=T(S)$ of train tracks $\tau_1,\ldots, \tau_T$ such that (1) every curve is carried by $\tau_i$
for some $i$ and (2) the set of curves that are carried by $\tau_i$ and have positive weights on all branches of $\tau_i$ is disjoint 
from the corresponding set of curves carried by $\tau_j$.
Pairs of attracting and repelling fixed points of pseudo-Anosov diffeomorphisms are dense in $\PML\times \PML$.

If there is an invariant mean $\sigma$ on $\euS$, for some $i$ the set $B_i$ of curves carried by $\tau_i$ satisfies 
$a_i=\sigma(B_i)>0$.
Choose $N$ such that $N>1/a_i$ for such an index $i$. 
Find $N$ pseudo-Anosovs $\psi_k$ with distinct attracting fixed points at laminations carried by $\tau_i$ 
and repelling fixed points carried 
by some $\tau_j$ with $j\neq i$. Disjoint neighborhoods around these attracting fixed points may be chosen such that all curves in each neighborhood are carried by $\tau_i$ and have positive weights on all branches of $\tau_i$; likewise for the repelling fixed points.  
Raising each $\psi_k$ to a high enough power $m_k$ we can conclude that  the curves carried by $\psi_k^{m_k}(\tau_i)$ 
are disjoint from the set of curves carried by $\psi_l^{m_l}(\tau_i)$ for $l\neq k$.  
By invariance, each of those sets has $\sigma$-mean $a_i$. Adding $N$ of these we find that $\sigma(\euS)>1$, a  
contradiction.
\end{proof}

\begin{corollary}
There is no $\Mod(S)$-invariant Borel probability measure $\mu$ on $\PML$.
\end{corollary}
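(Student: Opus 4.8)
The plan is to deduce the Corollary directly from the Proposition. A Borel probability measure is in particular a function $\mu\colon\euB(\PML)\to[0,1]$ satisfying $\mu(\emptyset)=0$ and $\mu(\PML)=1$; since countable additivity implies finite additivity, $\mu$ satisfies the first two axioms of a $\Mod(S)$-invariant mean. Thus the only thing left to check is that a $\Mod(S)$-invariant Borel probability measure is in particular a $\Mod(S)$-invariant mean, i.e.\ that the invariance condition $\mu(B)=\mu(gB)$ for all Borel $B$ and all $g\in\Mod(S)$ is exactly what it means for the measure to be $\Mod(S)$-invariant.

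First I would observe that any hypothetical $\Mod(S)$-invariant Borel probability measure on $\PML$ restricts to a finitely additive invariant set function, hence is a $\Mod(S)$-invariant mean on $\PML$ in the sense of the definition above. Then I would invoke the Proposition, which asserts that no such mean exists on $\PML$. This contradiction shows that no $\Mod(S)$-invariant Borel probability measure on $\PML$ can exist, proving the Corollary.

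There is essentially no obstacle here: the content is entirely in the Proposition, and the Corollary is a one-line consequence obtained by weakening countable additivity to finite additivity. The only point worth a word of care is making sure the measurability and invariance hypotheses line up — a Borel measure assigns values in $[0,1]$ to all Borel sets and is invariant under the $\Mod(S)$-action by homeomorphisms of $\PML$, so the three bulleted axioms of an invariant mean are all satisfied. Hence the Corollary follows immediately.
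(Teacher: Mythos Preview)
Your proof is correct and is exactly the intended argument: the paper states the Corollary immediately after the Proposition with no separate proof, and the implicit reasoning is precisely that any $\Mod(S)$-invariant Borel probability measure is in particular a $\Mod(S)$-invariant mean (countable additivity implying finite additivity), contradicting the Proposition.
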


\section{Genericity in the curve complex}\label{nullness}

In the paper \cite{LM2}, Lustig--Moriah give the following notion of genericity.

\begin{definition}
Let $X$ be a topological space, provided with a Borel 
measure or measure class $\mu$. Let $Y\subset X$ be a (possibly countable) subset with $\mu(\overline Y) \neq 0$.
Then the set $A\subset Y$ is called {\em generic in $Y$} (or simply 
generic, if $Y = X$) if  
$\mu(\overline{Y\setminus A}) = 0$. (Here closures are taken in $X$.)   
On the other hand $A$ is called {\em null} in $Y$ if $\mu(\overline A)=0$. 
\end{definition}

We can extend the definitions to products as follows.  Given  $E\subset Y\times Y$ and $a\in Y$ let 
$E(a):=\{b\in Y : (a,b)\in E ~\hbox{or}~ (b,a)\in E\}$.

\begin{definition}\label{def:nullinspheres}
$E$ is {\em null} in $Y\times Y$ if  $\{a\in Y : E(a) ~ \text{not null in}~Y\}$ is null.
\end{definition}

This definition for products corresponds to Fubini's theorem:  the set of points with non-null fibers must be null. 

We will focus on the case that $X=\PML(S)$ for any surface $S$ and $Y=\euS(S)$ is the set of simple closed curves.
Several examples and observations can be made immediately to illustrate that this notion is topologically interesting.
\begin{itemize}
\item  Nullness in $\euS$ is preserved by:  acting by $\Mod(S)$, passing to subsets, and finite unions.
A set is null if and only if its complement is generic.
\item The entire set $\euS$ is generic in $\euS$, and being generic in $\euS$ implies denseness in $\PML$.  (Because if $A$ misses an 
open set in $\PML$, then the closure of its complement has positive measure.)
\item There are natural subsets of $\euS$ that are neither null nor generic.
For instance, suppose that $g\ge 2$, so that $\euS$ has a nontrivial partition into {\em separating} and 
{\em nonseparating} curves.
Each of these subsets is dense in $\PML$, so  neither can be null or generic.
\item Our basic example of a null set in $\euS$ is the 
set of all curves disjoint from some $\alpha$, which is a copy of $\euS(S_\alpha)$ sitting inside
$\euS(S)$.  Its closure in $\PML(S)$ consists of those laminations giving zero weight to $\alpha$, which can be 
identified with a copy of $\PML(S_\alpha)$; since this has lower dimension, it has measure zero.  
\end{itemize}

Because nullness is not preserved by countable unions, the following proposition is less obvious.
It is  proved by Lustig--Moriah \cite[Cor 5.3]{LM2} using techniques from handlebodies, but we include a proof for completeness.  
Our proof is similar to well-known arguments, due to Luo and to Kobayashi \cite{Kobayashi}, showing that the complex of curves has infinite diameter (see the remarks following Proposition 4.6 of \cite{MM1}).

\begin{proposition}
\label{prop:bounded}
Any bounded-diameter subset of the curve graph  $\CC(S)$ is a null subset of $\euS$. 
\end{proposition}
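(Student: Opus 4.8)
The plan is to show that any subset $A\subset\euS$ of bounded diameter in $\CC(S)$ has closure in $\PML(S)$ of measure zero; the idea is to exhibit, for every point of $\overline A$, a large mapping-class-group orbit of disjoint ``shifted'' copies of $\overline A$ that must be pairwise disjoint, forcing $\mu(\overline A)$ to vanish. Concretely, suppose $A$ has diameter $D$ in $\CC(S)$ and fix a pseudo-Anosov $\psi\in\Mod(S)$ with attracting and repelling fixed laminations $F^+,F^-\in\PML(S)$. I first claim that the high powers $\psi^m(A)$ are eventually pairwise far apart in $\CC(S)$: indeed, if $\alpha$ is any fixed curve, then $d_S(\alpha,\psi^m(\alpha))\to\infty$ as $m\to\infty$ (this is the Luo/Kobayashi unbounded-orbit fact referenced before the statement, which follows from the subsurface-projection machinery recalled above—the quantity $d_S(\alpha,\psi^m\alpha)$ grows because subsurface projections accumulate, or more simply because a pseudo-Anosov acts with positive translation length on the $\delta$-hyperbolic space $\CC(S)$). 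Hence one can choose an increasing sequence $m_1<m_2<\cdots$ so that the sets $\psi^{m_k}(A)$ have pairwise $\CC(S)$-distance exceeding $2D+10$, and in particular are pairwise disjoint as subsets of $\euS$.

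Next I would pass to closures in $\PML(S)$. The key point is that $\overline{\psi^{m_k}(A)}=\psi^{m_k}(\overline A)$ since $\psi^{m_k}$ acts as a homeomorphism of $\PML(S)$, and that these closed sets remain pairwise disjoint. This requires a little care, because closures can meet even when the original sets are far apart in $\CC(S)$; the standard fix is to note that any accumulation point in $\PML(S)$ of a sequence of curves $\gamma_i$ with $d_S(\gamma_i,\alpha)\to\infty$ must be a lamination whose supporting subsurface is all of $S$ and which is ``infinitely far'' from $\alpha$ in the sense of subsurface coefficients—so two families whose $\CC(S)$-distances to each other stay bounded below by a large constant cannot have intersecting closures. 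Alternatively, and more cleanly for a finitely-additive-mean-free argument: one shows $\overline A$ is contained in the set of laminations at bounded ``distance'' from $A$ and uses that the $\psi^{m_k}$-translates of this thickened set are disjoint for $k$ large. Either way, we obtain infinitely many pairwise disjoint Borel sets $\psi^{m_k}(\overline A)$, each of measure $\mu(\overline A)$ by $\Mod(S)$-invariance of the measure class (here we only need that sets of measure zero are preserved, which holds for a measure class). Since $\PML(S)$ has total measure $1$ and countable additivity holds for the honest measure $\mu$, we conclude $\mu(\overline A)=0$.

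The main obstacle is the closure step: knowing that $A$ and $\psi^m(A)$ are far apart in the combinatorial metric $d_S$ does not immediately say their closures in $\PML(S)$ are disjoint, since $\PML(S)$ is compact and curve-graph distance is not continuous there. Handling this is exactly where the subsurface-projection bound $d_V(\alpha,\beta)\le 4$ for disjoint curves, together with the Behrstock-type inequality / the Masur--Minsky distance formula, does the real work: one argues that if $\xi\in\overline A\cap\overline{\psi^m(A)}$ then approximating curves $\beta_i\to\xi$ from $A$ and $\beta_i'\to\xi$ from $\psi^m(A)$ would have to be close in $\CC(S)$ for $i$ large (since nearby laminations share large subsurface projections), contradicting the separation we arranged. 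I would isolate this as a lemma: \emph{if $A\subset\euS$ has diameter $\le D$ then $\overline A\subset\PML(S)$ is contained in $\overline{N_{D}(A)}$ and, for a pseudo-Anosov $\psi$, $\overline A\cap\psi^m(\overline A)=\emptyset$ once the translation length exceeds $2D+C$}. Everything else is bookkeeping with the definition of nullness and the invariance of the measure class.
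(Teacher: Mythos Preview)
Your approach has a genuine gap at the measure step, and the proposed fix for the closure step is both unjustified and, once justified correctly, makes the rest of the argument superfluous.

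\textbf{The measure step.} You write that the disjoint translates $\psi^{m_k}(\overline A)$ ``each [have] measure $\mu(\overline A)$ by $\Mod(S)$--invariance of the measure class,'' and then add parenthetically that you only need null sets to be preserved. These two claims are incompatible: invariance of the measure \emph{class} tells you only that $\mu(\psi^{m_k}(\overline A))>0$ whenever $\mu(\overline A)>0$, not that the values agree. Countably many pairwise disjoint sets of positive measure in a probability space is no contradiction; their measures can simply be summable. And you cannot upgrade to an honest invariant measure: the paper proves just above this proposition that $\PML(S)$ admits no $\Mod(S)$--invariant probability measure, and for a single pseudo-Anosov $\psi$ the invariant measures on $\PML(S)$ are supported on $\{F^+,F^-\}$, hence not in the Lebesgue class. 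Working instead with the genuinely invariant Thurston measure on $\ML(S)$ does not help either, since $\overline A$ becomes an unbounded cone and $\psi^{m_k}$ does not preserve any ball.

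\textbf{The closure step.} Your proposed lemma---that $\beta_i\to\xi$ and $\beta_i'\to\xi$ in $\PML$ forces $d_S(\beta_i,\beta_i')$ to be small---is not something that follows from ``nearby laminations share large subsurface projections''; convergence in $\PML$ does not in general control curve-graph distance. The honest way to get disjointness of $\overline A$ and $\psi^m(\overline A)$ is to show that every $\xi\in\overline{B_D(\alpha_0)}$ satisfies $\I(\xi,\gamma)=0$ for some curve $\gamma\in B_{D-1}(\alpha_0)$: one takes a geodesic chain from $\alpha_0$ to each approximating curve, passes to subsequential limits $G_1,\dots,G_D=\xi$ with $\I(G_j,G_{j+1})=0$ by continuity of intersection number, and replaces each $G_j$ by $\partial Y_{G_j}$ (the boundary of its supporting subsurface) to land back in $\euS$. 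But once you have this, you are done without any pseudo-Anosov at all: it says
\[
\overline{\Sph r\alpha}\ \subset\ \bigcup_{\gamma\in\Sph{r-1}{\alpha}}\overline{\Sph 1\gamma}
\ =\ \bigcup_{\gamma\in\Sph{r-1}{\alpha}}\PML(S_\gamma),
\]
a countable union of positive-codimension (hence measure-zero) subspheres. This is exactly the paper's argument, and it bypasses both the disjointness-of-translates and the invariant-measure issues entirely.
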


We first prove that chains of disjoint laminations between two curves can be realized with curves.

\begin{definition}
Given a pair of laminations $F,G\in\PML$, define their {\em intersection distance} 
$\di(F,G)$ to be the smallest $n$  such that there exist 
$F=G_0,G_1,\ldots,G_n=G$ with $\I(G_j,G_{j+1})=0$.  
\end{definition}
(We note that $\di(F,G)$ can be infinite; this occurs if at least one of them is filling and they are topologically distinct.)

\begin{lemma} 
\label{lem:intersectiondistance}
$\di(\alpha,\beta)=\dc(\alpha,\beta)$ for all $\alpha,\beta\in\euS$.
\end{lemma}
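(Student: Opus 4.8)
The plan is to prove the two inequalities separately, under the standing assumption $h>2$ (on $S_{1,1}$ and $S_{0,4}$ any two distinct projective laminations have positive intersection number, so $\di$ degenerates there and the statement should be read only in the non-exceptional range). The inequality $\di(\alpha,\beta)\le\dc(\alpha,\beta)$ is essentially a tautology: a $\CC(S)$-geodesic $\alpha=\alpha_0,\alpha_1,\dots,\alpha_n=\beta$ has consecutive vertices joined by edges, which for $h>2$ means precisely that $\I(\alpha_i,\alpha_{i+1})=0$, so — viewing the $\alpha_i$ as points of $\PML$ — this is already a chain realizing $\di(\alpha,\beta)\le n=\dc(\alpha,\beta)$.

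The substantive direction is $\dc(\alpha,\beta)\le\di(\alpha,\beta)$, which I would phrase as announced: a chain of pairwise-disjoint laminations joining two curves can be upgraded to a chain of curves. Fix a chain $\alpha=G_0,G_1,\dots,G_n=\beta$ in $\PML$ with $\I(G_j,G_{j+1})=0$ for all $j$ and $n=\di(\alpha,\beta)$, and modify the interior terms $G_1,\dots,G_{n-1}$ one at a time, from left to right, keeping all intersection conditions. To process $G_j$ ($1\le j\le n-1$), let $W=\supp(G_j)$ be the subsurface filled by $G_j$. Since $G_{j-1}\ne\emptyset$ and $\I(G_{j-1},G_j)=0$, the lamination $G_j$ cannot fill $S$, so $W$ is a \emph{proper} essential subsurface and $\partial W$ is a nonempty union of curves in $\euS$. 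Realizing everything geodesically, $\I(G_{j-1},G_j)=0$ means $G_{j-1}\cup G_j$ is a geodesic lamination; because $G_j$ fills $W$ its complementary regions in $W$ are disks and once-punctured disks, which contain no leaves, so $G_{j-1}$ is disjoint from the interior of $W$; the same applies to $G_{j+1}$. Hence, taking $W$ with geodesic boundary, each component of $\partial W$ is either a leaf of, or disjoint from, $G_{j-1}$ and likewise $G_{j+1}$, so $\I(c,G_{j-1})=\I(c,G_{j+1})=0$ for every component $c$ of $\partial W$. Choosing any such $c=:c_j$ and replacing $G_j$ by $c_j$ yields a new chain, still with all consecutive intersection numbers zero, in which $G_0,\dots,G_{j-1},c_j$ are curves. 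Iterating over $j=1,\dots,n-1$ produces a chain $\alpha=c_0,c_1,\dots,c_n=\beta$ of curves with $\I(c_i,c_{i+1})=0$, i.e.\ an edge-path of length $n$ in $\CC(S)$, giving $\dc(\alpha,\beta)\le n$.

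The main point requiring care — the only genuinely non-formal step — is the interplay between vanishing intersection number and supporting subsurfaces used above: namely that $G_j$ does not fill $S$ (so that $W$ is proper with $\partial W\subset\euS$ nonempty) and that disjointness from the filling lamination $G_j$ of $W$ forces $G_{j\pm1}$ off $\mathrm{int}(W)$, hence off $\partial W$. One must also check that the left-to-right procedure is self-consistent, i.e.\ that after $G_j$ becomes $c_j$ and then $G_{j+1}$ becomes $c_{j+1}\in\partial\supp(G_{j+1})$ one still has $\I(c_j,c_{j+1})=0$; this holds because $c_j$ was disjoint from $G_{j+1}$, hence from $\mathrm{int}(\supp(G_{j+1}))$, hence from every component of $\partial\supp(G_{j+1})$. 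With that bookkeeping settled, the two inequalities combine to give $\di(\alpha,\beta)=\dc(\alpha,\beta)$.
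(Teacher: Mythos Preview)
Your proof is correct and follows essentially the same route as the paper: both arguments replace each intermediate lamination $G_j$ in a minimal disjointness chain by (a component of) the boundary of its supporting subsurface, using that $\I(F,G)=0$ forces $F$ off the interior of $\supp(G)$ and hence off $\partial\supp(G)$. The paper compresses this into the single implication chain $\I(F,G)=0\Rightarrow\I(F,\partial Y_G)=0\Rightarrow\I(\partial Y_F,\partial Y_G)=0$ and replaces all $G_i$ at once, whereas you process left to right and spell out the consistency check and the non-filling claim; your extra care (and the remark on the exceptional surfaces) is welcome but does not constitute a different strategy.
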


\begin{proof}
Since simple closed curves are laminations, 
 it is immediate that $\di(\alpha,\beta)\le \dc(\alpha,\beta)$.

For the other direction, we can assume that the laminations are not filling. Then given 
 a lamination $G\in\PML$, let us write $Y_G$ for its supporting subsurface.
Now  $\I(F,G)=0 \implies \I(F,\partial Y_G)=0 \implies \I(\partial Y_F,\partial Y_G)=0$.
But then given a
 minimal-length disjointness path 
$$\alpha - G_1 - G_2 - \cdots - G_{n-1} - \beta,$$
we can realize it by simple closed curves by replacing each $G_i$ with $\partial Y_{G_i}$.
\end{proof}

\begin{proof}[Proof of Proposition~\ref{prop:bounded}]
First given  $\alpha\in\euS$, let $\Sph r\alpha \subset \CC$ denote
the sphere of radius $r$ centered at $\alpha$. 
It is enough to prove the Proposition for each $\Sph r\alpha$.  For the ball of radius $1$, the statement follows since each $\beta$ 
satisfies $i(\alpha,\beta)=0$ and, as we saw above, the set of such $\beta$ has measure $0$ closure in $\PML(S)$. Notice that this closure $\overline{\Sph{1}{\alpha}}$, which we identify with a copy of $\PML(S_\alpha)$, is exactly the set of laminations $F\in \PML(S)$ for which $i(\alpha,F) = 0$.

Now we consider the closure of 
the sphere of radius $r$ and suppose  $G\in \Sphbar r{\alpha}$. Then  $G=\lim_{m\to\infty} \beta^r_m$ where 
$\beta^r_m\in \Sph r\alpha$.  Then for each $m$ there is a path  $\alpha,\beta_m^1,\ldots, \beta_m^r$ in $\CC$,where 
$\beta_m^j\in \Sph j{\alpha}$.  
Passing to subsequences we can assume that for each $j$ the sequence $\beta_m^j$ converges to some 
$G_j\in\Sphbar j{\alpha}$ with $G_r=G$.  
Furthermore since $\I(\beta_m^j,\beta_m^{j+1})=0$ it follows that $$\I(G_j,G_{j+1})=0.$$   
Replacing each $G_j$ with $\partial Y_{G_j}$, as in the proof of Lemma~\ref{lem:intersectiondistance},  we see that $\I(G,\gamma)=0$ for some $\gamma\in \Sph{r-1}{\alpha}$.  
But then $G\in \Sphbar 1{\gamma}$. Thus 
\[  \Sphbar r{\alpha} \subset \bigcup_{\gamma\in\Sph{r-1}{\alpha}}\Sphbar 1{\gamma}.\]
Thus $\Sphbar r{\alpha}$ is a countable union of measure-zero subsets of 
$\PML(S)$, hence has zero measure itself. Therefore $\Sph r\alpha$ is a null set by definition. 
\end{proof}

We wish to define what it means for a property to be generic for pairs of points in $\Sph r\alpha$.  Although $\Sph r\alpha$ is contained in $\euS$, the Lustig--Moriah definition of genericity in $\euS$ does not apply because the set $\Sph r\alpha$ is itself null in $\euS$.  Nonetheless, $\Sphbar 1\alpha=\PML(S_\alpha)$ is a topological sphere in its own right, and thus has its own natural measure class.  Therefore, we may define a subset $E$ of $\Sph 1\alpha$ to be \emph{null} if $\overline{E}$ has measure zero in $\PML(S_\alpha)$; 
this respects the topology of $\PML(S_\alpha)$ sitting inside of $\PML(S)$. We extend this notion of nullness to subsets $E\subset \Sph r\alpha$ of larger spheres in a ``visual'' manner by considering the set of points on the sphere of radius $1$ that are metrically between the center and $E$---these are the points of $\Sph 1\alpha$ that ``see'' the set $E$.

\begin{definition}\label{def:null}
 $E$ is {\em null} in $\Sph r\alpha$ if 
if  $E_1:=\{\gamma \in \Sph 1\alpha : E\cap \Sph{r-1}{\gamma} \neq\emptyset\}$ is null in 
$\Sph 1\alpha \hookrightarrow \PML(S_\alpha)$.
\end{definition}

\begin{remark}  The definition given above is the most restrictive notion of nullness that makes 
use of $\Sph 1\alpha$ as a visual sphere (i.e., that treats the $1$--sphere as the sphere of directions).  
Another possible definition, also natural from the point of view
of Fubini's theorem, would be inductive:  suppose nullness has been defined 
for spheres of radius $1,\ldots,r-1$.
Instead of $E_1$, the full set of points that see $E\subset\Sph r\alpha$, we form the smaller set 
$$E_1'=\{\gamma\in \Sph 1\alpha : E\cap \Sph{r-1}{\gamma} 
~\hbox{\rm is  not null in $\Sph{r-1}{\gamma}$}\}.$$
Then we could declare $E\subset \Sph r \alpha$ to be null in $\Sph r\alpha$  if
 $E_1'$ is null in $\Sph 1\alpha$, completing the inductive definition.
\end{remark}

\begin{example}
To get a feeling for these definitions,  consider the examples of $\R^2$ with the Euclidean 
metric or $\ell^1$ metric, with the Lebesgue measure class on the sphere of radius 1 in each case.
To accord with geometric intuition, we expect arcs to be non-null and points to be null.

\begin{figure}[ht]
\begin{tikzpicture}[scale=3/4]

\filldraw (0,0) circle (0.05);
\draw (0,0) circle (1);
\draw (0,0) circle (2);
\draw [blue,line width=3] (110:2) arc (110:140:2);
\draw [blue,line width=3] (110:1) arc (110:140:1);
\filldraw [red] (40:2) circle (0.05);
\filldraw [red] (40:1) circle (0.05);

\begin{scope}[xshift=5cm]
\filldraw (0,0) circle (0.05);
\draw (-1,0) -- (0,-1) -- (1,0) -- (0,1)-- cycle;
\draw (-2,0) -- (0,-2) -- (2,0) -- (0,2)-- cycle;
\draw [blue, line width=3] (-1.2,.8)--(-.8,1.2);
\draw [blue, line width=3] (-1,0)--(0,1);

\filldraw [red] (1.2,.8) circle (0.05);
\draw [red, line width=3] (0,1)--(1,0);
\end{scope}
\end{tikzpicture}
\caption{In each metric, an arc $E$ and a point $E$ are shown on the sphere of radius two
together with the associated $E_1$ for each.}
\end{figure}
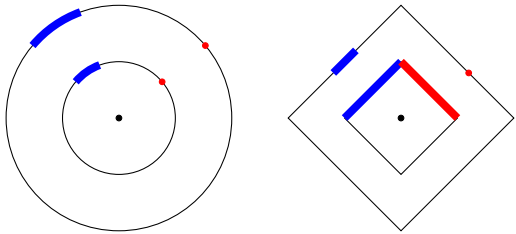

In the Euclidean metric, if $E$ is an arc on the sphere of radius $r$, then $E_1$ is also an arc but $E_1'$ is empty.  
If $E$ is a point, then $E_1$ is a point while $E_1'$ is again empty.
In the $\ell^1$ metric, if $E$ is a nontrivial arc on the sphere of radius $r$, then $E_1$ is a nontrivial arc, and so 
is $E_1'$.  In this setting, however, points in nonaxial directions have a large $E_1$ but an empty $E_1'$.

This means that our visual definition of nullness works intuitively in the $\ell^2$ case (points are null but arcs
are not), but less so in the $\ell^1$ case (where even points are typically non-null).  
The weaker, inductively defined, notion of nullness makes even arcs null in Euclidean space, but on 
the other hand behaves intuitively on $\ell^1$.
This suggests that the visual definition of nullness is better adapted to capturing the geometry of spheres in  certain 
spaces, while the inductive definition would be better adapted to others.  However, being null in our sense
implies nullness in the weaker sense.
\end{example}

Returning to the curve graph:  consider distinct curves
$\beta,\gamma\in \Sph 1\alpha$.  Clearly $\dc(\beta,\gamma)$ is either 1 or 2.  
We can easily see that such pairs generically have distance $2$ because since $\alpha$ and $\beta$ are disjoint, the set of $\gamma\in \Sph{1}{\alpha}$ for which $d_S(\beta,\gamma)=1$ is contained in $\PML(S_{\alpha,\beta})$, which has codimension at least two in $\PML(S_{\alpha})$.

\begin{figure}[ht]
\begin{tikzpicture}
\draw (0,0) circle (1);
\filldraw (65:1) circle (0.05) node [above right] {$\beta$}; 
\filldraw (28:1) circle (0.05) node [right] {$\gamma$}; 
\filldraw (0,0) circle (0.05) node [below] {$\alpha$};
\end{tikzpicture}
\end{figure}

Our main result is that given any limit on their length, paths connecting points on the sphere ``almost surely" 
pass through the center.

\begin{theorem}[Avoiding the center]  For a surface $S$ with $h\ge 4$, consider a point $\alpha\in\CC(S)$.
For $K>0$, let $\short Kr\alpha \subseteq \Sph r\alpha \times \Sph r\alpha$ consist of 
those pairs $(\beta,\gamma)$ that are connected by some 
path of length $\le K$ that does not go through $\alpha$.  Then for any $K$ and $r$, the set $\short Kr\alpha$ is 
null. 
\end{theorem}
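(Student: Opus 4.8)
The plan is to reduce the statement about $\short Kr\alpha$ to an accessible geometric fact about subsurface projections, exactly as in the $r=1$ warm-up that precedes the theorem. By Definition~\ref{def:null}, it suffices to show that the set
\[
(\short Kr\alpha)_1 = \{\gamma \in \Sph 1\alpha : \short Kr\alpha \cap \Sph{r-1}{\gamma} \neq \emptyset\}
\]
is null in $\Sph 1\alpha \hookrightarrow \PML(S_\alpha)$, i.e.\ that its closure in $\PML(S_\alpha)$ has measure zero. The first step is to unwind what membership in this set means: $\gamma \in (\short Kr\alpha)_1$ precisely when there is a pair $(\beta,\gamma')$ of curves on $\Sph r\alpha$, with $\gamma \in \Sph{r-1}{\gamma'}$ actually irrelevant—let me restate—when there exist curves $\beta$ lying on a radius-$r$ geodesic from $\alpha$ through $\gamma$, and $\gamma'$ on $\Sph r\alpha$, joined by a path of length $\le K$ missing $\alpha$. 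The key structural observation I would isolate is: \emph{the center $\alpha$ is the dominant obstruction}, in the sense that any curve $\beta$ with $\dc(\alpha,\beta)=r\ge 2$ must project into $\CC(S_\alpha)$ with controlled image, and a short path from $\beta$ avoiding $\alpha$ is forced to intersect $\alpha$ a bounded number of times, hence projects to a short path in $\CC(S_\alpha)$.

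Concretely, the second step uses the Bounded Geodesic Image theorem of Masur--Minsky: there is a constant $M=M(S)$ such that if a path in $\CC(S)$ stays outside the $1$-neighborhood of $\alpha$ (equivalently, every vertex on it intersects $\alpha$), then its image under $\pi_{S_\alpha}$ has diameter $\le M$. A path of length $\le K$ from $\beta$ to $\gamma'$ missing $\alpha$ need not avoid the $1$-neighborhood of $\alpha$, but it decomposes into at most $K$ subpaths each of which either lies in $\Sph 1\alpha = \CC(S_\alpha)$ or stays outside the $1$-ball; concatenating, $d_{S_\alpha}(\pi_{S_\alpha}(\beta), \pi_{S_\alpha}(\gamma')) \le K(M+4)$ or so. On the other hand, $\gamma$ lies at distance $r-1$ from $\beta$ and at distance $1$ from $\alpha$, so $\pi_{S_\alpha}(\gamma) = \gamma$ and $d_{S_\alpha}(\gamma, \pi_{S_\alpha}(\beta))$ is \emph{large}—this is where I expect to invoke that a geodesic $\alpha, \gamma, \beta_2, \ldots, \beta_{r-1}, \beta$ of length $r$ in $\CC(S)$, with all of $\gamma, \beta_2, \ldots$ intersecting $\alpha$ (they must, being at distance $\ge 1$ and $\le r-1 < r$... careful: $\gamma$ is at distance exactly $1$, so it \emph{is} in $\CC(S_\alpha)$, and the tail $\beta_2,\dots,\beta$ all intersect $\alpha$), forces $d_{S_\alpha}(\gamma,\beta)$ to grow with $r$. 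Thus for $r$ large relative to $K$, the curve $\gamma$ is pinned to lie within bounded $\CC(S_\alpha)$-distance of a point determined by $\beta$ and $\gamma'$; running over all finitely-many-up-to-finite-data choices, $(\short Kr\alpha)_1$ is contained in a bounded-diameter subset of $\CC(S_\alpha)$, which is null in $\euS(S_\alpha)$ by Proposition~\ref{prop:bounded} applied to the surface $S_\alpha$.

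For small $r$ (where $r$ is not large compared to $K$) this growth argument gives nothing, so I would handle those cases separately—but in fact Proposition~\ref{prop:bounded} already shows \emph{every} sphere $\Sph r\alpha$ is null in $\euS(S)$, and more to the point, for the induced notion: if $r-1$ is small the relevant containment $(\short Kr\alpha)_1 \subseteq$ (bounded set) can be obtained directly since $\short Kr\alpha \subseteq \Sph r\alpha \times \Sph r\alpha$ and one checks $(\short Kr\alpha)_1$ sits inside finitely many sets of the form $\Sph{1}{\delta}$—actually the cleanest uniform argument routes everything through the projection bound and the observation that $\pi_{S_\alpha}(\beta)$ ranges over a set of bounded diameter is \emph{false} in general, so the real content is: for \emph{fixed} $\gamma'$ the constraint is vacuous, but $(\short Kr\alpha)_1$ is a union over $\gamma'\in\Sph r\alpha$, a countable set, of sets each of which—by the projection estimate—has bounded-diameter closure, hence is null, and a countable union of... no: nullness is \emph{not} closed under countable unions. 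This is the main obstacle, and it is exactly the obstacle Proposition~\ref{prop:bounded} was built to overcome: the resolution is to show, as there, that $\overline{(\short Kr\alpha)_1} \subseteq \bigcup_{\delta} \overline{\Sph 1\delta}$ over a countable family of curves $\delta$ in $S_\alpha$ (playing the role that $\Sph{r-1}\gamma$ played), so that the closure is a countable union of \emph{positive-codimension} subsurface loci in $\PML(S_\alpha)$ and therefore has measure zero. Producing that containment—taking limits of pairs $(\beta_m,\gamma'_m)$ and length-$\le K$ paths, extracting limiting chains of zero-intersection laminations, and applying the $\partial Y$ trick from Lemma~\ref{lem:intersectiondistance}—is where the work lies; the rest is bookkeeping with the constants $M$, $K$, and the bound $4$ on disjoint-curve projections.
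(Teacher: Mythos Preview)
Your proposal contains two genuine errors, one structural and one geometric, and together they send you down an unnecessarily complicated path.

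\textbf{Structural error.} The set $\short Kr\alpha$ lives in the product $\Sph r\alpha \times \Sph r\alpha$, so the relevant definition is the Fubini-style one for products (Definition~\ref{def:nullinspheres}), not Definition~\ref{def:null}. You must first fix one coordinate, say $\beta\in\Sph r\alpha$, form the fiber $E(\beta)=\{\gamma:(\beta,\gamma)\in\short Kr\alpha\}\subset\Sph r\alpha$, and show \emph{this} is null in $\Sph r\alpha$; only then does Definition~\ref{def:null} apply, reducing to the set $E_1(\beta)\subset\Sph 1\alpha$. Because you never fix $\beta$, you end up trying to control a single set $(\short Kr\alpha)_1$ that implicitly ranges over all $\beta$, and this is exactly what leads you to the countable-union impasse you describe at the end. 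In the paper's argument no countable union ever arises: one shows $E_1(\beta)$ has bounded diameter for \emph{each fixed} $\beta$, hence $E(\beta)$ is null for every $\beta$, hence the bad set of $\beta$ is empty.

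\textbf{Geometric error.} Your central claim---that a geodesic $\alpha,\gamma,\beta_2,\dots,\beta$ forces $d_{S_\alpha}(\gamma,\beta)$ to \emph{grow} with $r$---is false, and in fact the opposite is the key input. The Bounded Geodesic Image theorem applies as soon as every vertex of a geodesic has nonempty projection to $S_\alpha$, which for $Y=S_\alpha$ just means no vertex equals $\alpha$. The tail $\gamma,\beta_2,\dots,\beta$ satisfies this, so $d_{S_\alpha}(\gamma,\beta)\le M$, a uniform bound independent of $r$. Combined with the elementary estimate $d_{S_\alpha}(\beta,\gamma')\le 4K$ along the short $\alpha$-avoiding path (successive disjoint curves project to distance $\le 4$; no decomposition into subpaths is needed), the triangle inequality pins every $\gamma_1\in E_1(\beta)$ within distance $M+4K$ of $\pi_{S_\alpha}(\beta)$. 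Thus $E_1(\beta)$ has diameter at most $2(M+4K)$ in $\CC(S_\alpha)$ and is null by Proposition~\ref{prop:bounded}. The limit-and-$\partial Y$ machinery you propose at the end is not needed.
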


\begin{proof}  
For any pair $(\beta,\gamma)\in \short Kr\alpha$, there is a path $\beta=\delta_0, \delta_1,\ldots,\delta_k=\gamma$
in $\CC(S)$ with $k\le K$, and $\delta_i\neq \alpha$ for each $i$.  
Two successive curves $\delta_i$ and $\delta_{i+1}$, since they are disjoint and  intersect $S_\alpha$, 
  project to nonempty sets in $\CC(S_\alpha)$ whose distance from each other is at most $4$; thus  $d_{S_\alpha}(\beta,\gamma)\le 4K$.

\begin{figure}[ht]
\begin{tikzpicture}

\draw (0,0) node [left] {$\alpha$}--(1,1) node [above] {$\beta_{1}$} --(2,1)--(2.2,1);
\node at (3,1) {$\cdots$};
\draw (3.8,1)--(4,1)node [above] {$\beta_{r-1}$}  -- (5,1) node [above] {$\beta$};
\draw (0,0)--(1,-1)  node [below] {$\gamma_{1}$} --(2,-1)-- (2.2,-1);
\node at (3,-1) {$\cdots$};
\draw (3.8,-1)--(4,-1) node [below] {$\gamma_{r-1}$} -- (5,-1) node [below] {$\gamma$};
\foreach \a in {(0,0),(1,1),(2,1),(4,1),(5,1),(1,-1),(2,-1),(4,-1),(5,-1)}
\filldraw \a circle (0.05);
\draw [dashed] (5,1) .. controls (4,0) .. (5,-1);
\end{tikzpicture}
\end{figure}

Let $\beta_{1},\gamma_{1}$ be any closest points 
 on $\Sph {1}\alpha$ to $\beta,\gamma$, respectively. 
Since we can join $\gamma,\gamma_1$ by a geodesic in $\CC(S)$ that misses $\alpha$,
there is a constant $M=M(S)$ coming from  Masur-Minsky \cite[Thm 3.1]{MM2} 
such that $d_{S_\alpha}(\gamma,\gamma_1)\leq M$.  
By the triangle inequality, 
 $$d_{S_\alpha}(\beta,\gamma_{1})\leq M+4K.$$ 
 For each $\beta\in \Sph r \alpha$, let $E(\beta)=\{\gamma:(\beta,\gamma)\in 
 \short Kr\alpha\}$ and then consider the corresponding 
 $E_1(\beta)=\{\gamma_1 \in \Sph 1\alpha : E(\beta)\cap \Sph{r-1}{\gamma_1} \neq\emptyset\}$.

We have shown  that $E_1(\beta)$ has diameter at most $2M+8K$ by the triangle inequality and is therefore 
null in $\Sph{1}{\alpha}$ by 
Proposition~\ref{prop:bounded}.  Thus $E(\beta)$ is null for all $\beta$, so $\short Kr\alpha$ 
is null.  \end{proof}

\begin{corollary}[Statistical hyperbolicity]
Consider pairs of points $\beta,\gamma\in\Sph r\alpha$ such that $d_S(\beta,\gamma)<2r$.
The set of all such pairs is null.  So for a generic pair of points on $\Sph r\alpha$, the distance is
exactly $2r$.
\end{corollary}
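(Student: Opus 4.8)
The plan is to deduce the Corollary directly from the Theorem by a short argument that converts a distance bound into a bounded-length path avoiding the center. First I would fix $\alpha$ and radius $r$, and suppose $\beta,\gamma\in\Sph r\alpha$ with $\dc(\beta,\gamma) < 2r$. Since distances in $\CC(S)$ are integers, this means $\dc(\beta,\gamma)\le 2r-1$, so there is a geodesic path from $\beta$ to $\gamma$ of length at most $2r-1$. The key observation is that this path cannot pass through $\alpha$: if it did, then $\alpha$ would lie on a geodesic from $\beta$ to $\gamma$, forcing $\dc(\beta,\gamma) = \dc(\beta,\alpha) + \dc(\alpha,\gamma) = r + r = 2r$, contradicting our assumption. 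Hence the pair $(\beta,\gamma)$ is connected by a path of length at most $K:=2r-1$ that avoids $\alpha$, i.e. $(\beta,\gamma)\in \short{2r-1}{r}{\alpha}$.

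Therefore the set of pairs $(\beta,\gamma)\in\Sph r\alpha\times\Sph r\alpha$ with $\dc(\beta,\gamma)<2r$ is contained in $\short{2r-1}{r}{\alpha}$, which is null by the Theorem (applied with $K = 2r-1$). By the closure properties of nullness for subsets of $\Sph r\alpha\times\Sph r\alpha$ --- namely that a subset of a null set in a product is null, which follows immediately from Definition~\ref{def:nullinspheres} together with the fact that nullness in $\Sph 1\alpha\hookrightarrow\PML(S_\alpha)$ passes to subsets --- the set of such pairs is itself null. For the final sentence of the Corollary, note that for \emph{any} pair $\beta,\gamma\in\Sph r\alpha$ we automatically have $\dc(\beta,\gamma)\le\dc(\beta,\alpha)+\dc(\alpha,\gamma) = 2r$ by the triangle inequality. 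Combining this upper bound with the fact that the pairs achieving $\dc(\beta,\gamma)<2r$ form a null set, we conclude that for a generic pair of points on $\Sph r\alpha$ the distance is exactly $2r$.

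I do not expect any genuine obstacle here; this is a clean corollary, and the only points requiring a moment's care are the integrality argument (that strict inequality $<2r$ upgrades to $\le 2r-1$, which makes the path length bound $K$ finite and explicit) and the bookkeeping that ``subset of null is null'' in the product sense. The latter is worth stating explicitly, since nullness in our setting is only finitely additive and one must be sure the monotonicity property is available --- but it is, because passing to subsets preserves nullness both in $\Sph 1\alpha$ (Definition~\ref{def:null} and the subset-closure noted after Proposition~\ref{prop:bounded}) and hence in the product via Definition~\ref{def:nullinspheres}. The substantive content all lives in the Theorem; the Corollary is essentially a restatement once one notices that a non-geodesic-through-$\alpha$ witnesses $\dc(\beta,\gamma)<2r$ and conversely.
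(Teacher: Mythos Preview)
Your proposal is correct and is exactly the short deduction the paper intends: the Corollary is stated without proof immediately after the Theorem, and your argument---upgrade $\dc(\beta,\gamma)<2r$ to $\le 2r-1$ by integrality, observe a geodesic of that length cannot hit $\alpha$, conclude containment in $\short{2r-1}{r}{\alpha}$, and invoke the Theorem---is the intended one-line reasoning. The monotonicity check you flag (subset of null is null in the product sense) is indeed available for the reason you give, and in fact the Theorem's proof shows $E(\beta)$ is null for \emph{every} $\beta$, so the same holds automatically for any subset.
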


In closing, we note that there is a coarsely well-defined, coarsely Lipschitz
map from Teichm\"uller space to the curve
complex given by taking a short marking at every point.
With respect to natural measures on $\T(S)$, 
generic geodesics make definite progress in the curve graph, as we show in \cite{DDM1}.  
Thus we can see a very loose heuristic for calculating average distances in  $\T(S)$ by projecting to $\CC(S)$
and appealing to the phenomena that we have just demonstrated in the curve graph. Obtaining the precise aymptotics needed for statistical hyperbolicity, as we do in \cite{DDM1}, takes much more work.



\end{document}